\documentclass[12pt,reqno]{amsart}
 \usepackage[utf8]{inputenc}
 \usepackage[T1]{fontenc}
 \usepackage[usenames, dvipsnames]{color}
 \usepackage{ulem}
 
 \usepackage{dsfont, amsfonts, amsmath, amssymb,amscd, stmaryrd, latexsym, amsthm, dsfont}
 \usepackage[frenchb,english]{babel}
 \usepackage{enumerate}
 \usepackage{longtable}
 \usepackage{geometry}
 \usepackage{float}
 \usepackage{tikz}
 \usetikzlibrary{shapes,arrows}
 \geometry{margin=3.5cm,head=0.2cm,headsep=6pt,foot=0.5cm}
 
 
 \usepackage{float}
 \usepackage{tikz}
 \usepackage{xypic}
 \usetikzlibrary{shapes,arrows}
 

 \theoremstyle{plain}

 \newtheorem{theorem}{Theorem}[section]
 \newtheorem{lemma}[theorem]{Lemma}
 \newtheorem{proposition}[theorem]{Proposition}
 \newtheorem{corollary}[theorem]{Corollary}


 \theoremstyle{remark}

 \newtheorem{definition}[theorem]{\bf Definition}


 \def\QQ{\mathbb{Q}}

 \usepackage{hyperref}
 \hypersetup{
 	colorlinks=true,
 	urlcolor=blue,
 	citecolor=blue}
 
 \begin{document}

 	\selectlanguage{english}
 	\title[ Hilbert genus fields ...]{ Hilbert genus fields of some number fields with high degrees}

 		\author[M. M. Chems-Eddin]{Mohamed Mahmoud Chems-Eddin}
 	\address{Mohamed Mahmoud Chems-Eddin: Sidi Mohamed Ben Abdellah University,   
 		Faculty of Sciences Dhar El Mahraz, Departement of Mathematics, Fez, Morocco }
 	\email{2m.chemseddin@gmail.com}
 	
 	\author[M. A. Hajjami]{Moulay Ahmed Hajjami} 
 	\address{Moulay Ahmed Hajjami: Moulay Ismail University of Meknes, Faculty of  Sciences and Technology, Department of  Mathematics, Errachidia, Morocco.}
 	\email{a.hajjami76@gmail.com}

 	\author[M. Taous]{Mohammed Taous}
 	\address{Mohammed Taous: Moulay Ismail University of Meknes, Faculty of Sciences, Departement of Mathematics, Meknes,   Morocco.}
 	\email{taousm@hotmail.com}

 	\keywords{  Unramified extensions,  Hilbert genus fields.}
 	\subjclass[2010]{11R16, 11R29, 11R27, 11R04, 11R37}
 	
 	\begin{abstract}
   The aim of this paper is to give some properties of Hilbert genus fields and 
	  construct  the Hilbert genus fields of the fields $L_{m,d}:=\mathbb{Q}(\zeta_{2^m},\sqrt{d})$, where      $m\geq 3$ is a positive integer and $d$   is a square-free integer whose    prime divisors   are congruent to $\pm 3\pmod 8$ or 
$9\pmod{16}$.
 	\end{abstract}
 	
 	\selectlanguage{english}
 	
 	\maketitle
 	
 	\section{\bf Introduction}\label{sec:1}
Let $d$ be a square-free integer and  $m\geq 3$ a positive integer. The fields 	$L_{m,d}:=\mathbb{Q}(\zeta_{2^m},\sqrt{d})$  represent the layers of the cyclotomic $\mathbb{Z}_2$-extension of the special Dirichlet fields $\mathbb{Q}(\sqrt{-1},\sqrt{d})$ and they were the subject of some recent studies, which gave interest to their $2$-class groups (\textnormal{cf}. \cite{Chemstherank,chemskatharina}).  In the present, work we shall construct the Hilbert genus fields of these fields.

 This question  have been  investigated by many mathematicians for   fields of small degrees (\textnormal{eg}. $2$ and $4$). For example,   Bae and Yue   studied the Hilbert genus field of the fields $\mathbb{Q}(\sqrt{p}, \sqrt{d})$, for 
 a prime number $p$ such that, $p=2$ or $p\equiv 1\pmod 4$, and    a positive square-free  integer $d$ (\textnormal{cf}. \cite{ref2}).

 Thereafter, Ouyang and Zhang have determined the Hilbert genus field of the imaginary biquadratic fields $ \mathbb{Q}(\sqrt{\delta}, \sqrt{d})$, where $\delta =-1, -2$  or $-p$ with $p\equiv 3\pmod 4$ a prime number and $d$ any square-free integer. Thereafter,  they   constructed   the Hilbert genus field of real biquadratic fields $\mathbb{Q}(\sqrt{\delta}, \sqrt{d})$, 
 for   any  positive square-free integer $d$, and   $\delta=p, 2p$ or $p_1p_2$ where $p$, $p_1$ and $p_2$ are prime numbers congruent to $3\pmod 4$, such that the class number of $ \mathbb{Q}(\sqrt{\delta})$ is odd (\textnormal{cf}.  \cite{OuyangZhang2014,OuyangZhang2015}). For more works on this subject, we refer the reader to the  papers \cite{OuyangZhang2015,ref19,hajjaChems,ref20}.

 In this paper, we shall construct the Hilbert genus field  of the fields $L_{m,d}$, $m\geq 3$, whenever all the prime divisors of $d$ are congruent to $\pm 3\pmod 8$ or 
 $9\pmod{16}$. Our results generalize some results of Ouyang and Zhang on the fields $\mathbb{Q}(\sqrt{-1},\sqrt{d})$ (\textnormal{cf}.  \cite{OuyangZhang2014}).
 
 The plan of this paper is as follows; in Section \ref{sec:2}, we   introduce   some  properties of the Hilbert genus field. In the last section,  we give the list of Hilbert genus fields of the fields  $L_{m,d}$.
 	
 	Let us stick the following notations: Let $k$ be a number field. $\mathcal{O}_k$   the ring of integers      $k$.
 		$E(k)$ the Hilbert genus field of $k$. $N_{k/k'}$ denotes  the norm map of an extension $k/k'$.   Let $r_2(\mathrm{G})$ denote the $2$-rank of an abelian finite group $\mathrm{G}$. Denote by $\zeta_{n}$  an  $n$th primitive root of unity.
 	For more notations see   the beginning of each section below.

 	\section{\bf Some results on Hilbert genus fields}\label{sec:2}

 	  	Let us  start this section by    recalling  some definitions and results.	
 	Let $k$ be a number field and denote by $\mathbf{C}l(k)$ its class group. It is know by class field theory that we have  $\mathrm{G}:=\mathrm{Gal}(H(k)/k)\simeq \mathbf{C}l(k)$, where 
 	$ H(k)$  is the Hilbert class field. The 
 	Hilbert genus field of $k$   is defined as the invariant field $E(k)$ of $\mathrm{G}^2$. Then,   we  have:
 	$$\mathbf{C}l(k)/\mathbf{C}l(k)^2  \simeq \mathrm{G}/\mathrm{G}^2 \simeq \mathrm{Gal}(E(k)/k),$$
 	and thus,   $r_2(\mathbf{C}l(k))$ =   $r_2(\mathrm{Gal}(E(k)/k))$. On the other hand, $E(k)/k$ is the maximal unramified Kummer extension of exponent $2$. Thus, by Kummer theory (cf. \cite[p. 14]{ref16}), there exists a unique multiplicative group $\Delta(k)$ such that
 	$$
 	E(k)=H(k)\cap k(\sqrt{k^*})= k(\sqrt{\Delta(k)})  \text{  and }  {k^*}^2 \subset {\Delta(k)} \subset k^*.
 	$$
 	Therefore, the  construction of   the Hilbert genus field of $k$ is equivalent  to the determination of  a set of generators for the finite group $\Delta(k)/k{^*}^2$.
 	\vspace{0.3cm}

We need the following proposition which is a particular case of the result in \cite[p. 239]{Mollin}.
 	
 	\begin{proposition}[\cite{Mollin}, Theorem 5.20]\label{unramifed quad ex}
 		Let $k/k'$ be a quadratic extension of number fields  and $\alpha$ a element of $k'$, coprime with $2$, such that $k=k'(\sqrt{\alpha})$. The extension $k/k'$ is unramified  at all finite primes of $k'$ if  and only  if     the two following items  hold:
 		\begin{enumerate}[\indent\rm 1.]
 			\item the ideal generated by $\alpha$ is the square of the fractional ideal of $k'$, and 
 			\item there exists a nonzero number $\xi$ of $k'$ verifying $\alpha \equiv \xi^2\pmod{4}$.
 		\end{enumerate}
 	\end{proposition}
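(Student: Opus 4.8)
The plan is to verify ramification prime by prime, using the fact that $k/k'$ is unramified at all finite primes of $k'$ if and only if each completion $k'_{\mathfrak p}(\sqrt{\alpha})/k'_{\mathfrak p}$ is unramified for every finite prime $\mathfrak p$ of $k'$. I would split the finite primes into those not dividing $2$ (the tame case) and those dividing $2$ (the wild case), since the two items in the statement correspond precisely to these two families, and I would prove both implications simultaneously by showing that each item is \emph{equivalent} to unramifiedness along its family of primes.

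First, for an odd prime $\mathfrak p$ (so that $2\in\mathcal O_{k',\mathfrak p}^{*}$), let $v_{\mathfrak p}$ denote the normalized valuation. Adjoining the square root of a unit to a local field of odd residue characteristic yields an unramified (or trivial) extension, whereas adjoining the square root of a uniformizer yields a ramified one; hence $k'_{\mathfrak p}(\sqrt{\alpha})/k'_{\mathfrak p}$ is unramified exactly when $v_{\mathfrak p}(\alpha)$ is even. Because $\alpha$ is coprime to $2$, we have $v_{\mathfrak p}(\alpha)=0$ at every $\mathfrak p\mid 2$, so requiring $v_{\mathfrak p}(\alpha)$ to be even for all odd $\mathfrak p$ is the same as requiring it for all $\mathfrak p$, i.e. exactly item~1 that the ideal $(\alpha)$ is the square of a fractional ideal of $k'$. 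This settles the tame primes in both directions.

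Next, for $\mathfrak p\mid 2$ the element $\alpha$ is a local unit, and the substance of the argument is the local criterion: for a $2$-adic field $K$ with ring of integers $\mathcal O_K$ and $u\in\mathcal O_K^{*}$, the extension $K(\sqrt{u})/K$ is unramified (or trivial) if and only if $u\equiv\xi^{2}\pmod{4\mathcal O_K}$ for some $\xi\in\mathcal O_K$. I would establish this by examining the polynomial $x^{2}-u$: when $u\equiv\xi^{2}\pmod 4$ one checks through Hensel's lemma that $x^{2}-u$ acquires a root in the unramified quadratic extension of $K$, so that $\sqrt{u}$ lies in an unramified extension; conversely, when $u$ is not a square modulo $4$, the different of $K(\sqrt{u})/K$ is nontrivial and the extension ramifies. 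Granting this criterion, unramifiedness at each $\mathfrak p\mid 2$ furnishes, for every such $\mathfrak p$, a local element $\xi_{\mathfrak p}$ with $\alpha\equiv\xi_{\mathfrak p}^{2}\pmod{4\mathcal O_{k',\mathfrak p}}$.

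The final and most delicate step is to globalize, producing a single $\xi\in k'$ that works simultaneously at every prime above $2$. Since $(4)=\prod_{\mathfrak p\mid 2}\mathfrak p^{2e_{\mathfrak p}}$ is supported only on these primes, the congruence $\alpha\equiv\xi^{2}\pmod 4$ imposes conditions solely at the $\mathfrak p\mid 2$. By the Chinese Remainder Theorem I would choose $\xi\in\mathcal O_{k'}$ with $\xi\equiv\xi_{\mathfrak p}\pmod{2\mathcal O_{k',\mathfrak p}}$ at each $\mathfrak p\mid 2$; writing $\xi=\xi_{\mathfrak p}+2t$ gives $\xi^{2}\equiv\xi_{\mathfrak p}^{2}\pmod 4$ locally, whence $\alpha\equiv\xi^{2}\pmod 4$ in $k'$, which is item~2. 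Conversely, items~1 and~2 feed back into the same local analysis to give unramifiedness at every odd prime and at every $\mathfrak p\mid 2$, completing both implications. I expect the main obstacle to be precisely the wild primes $\mathfrak p\mid 2$: pinning down the mod-$4$ criterion and its equivalence with (non)triviality of the local extension, and then patching the local $\xi_{\mathfrak p}$ into one global $\xi$.
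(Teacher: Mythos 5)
Your proposal cannot be compared against an internal argument, because the paper offers none: this proposition is imported as a known result (Theorem 5.20 of Mollin's book) and is used as a black box, so what you have done is reconstruct the textbook proof. Your reconstruction follows the standard and correct route, and the decomposition is exactly the right one: the local-global reduction, the observation that at an odd prime $\mathfrak p$ the extension $k'_{\mathfrak p}(\sqrt{\alpha})/k'_{\mathfrak p}$ is unramified precisely when $v_{\mathfrak p}(\alpha)$ is even (which, since $\alpha$ is prime to $2$, packages all the tame conditions into item~1), the mod-$4$ unit criterion at dyadic primes for item~2, and the Chinese Remainder patching $\xi\equiv\xi_{\mathfrak p}\pmod{2\mathcal{O}_{k',\mathfrak p}}$, which correctly yields $\xi^{2}\equiv\xi_{\mathfrak p}^{2}\pmod{4}$ locally. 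Note also that the paper's only use of the proposition (in Corollary \ref{primitive unramified extensions} and throughout Section \ref{sec:3}) is the sufficiency direction, so for the paper's purposes half of your argument already suffices.

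Two spots deserve tightening, though neither is a fatal gap. First, your converse at $\mathfrak p\mid 2$ (``when $u$ is not a square modulo $4$, the different is nontrivial and the extension ramifies'') is asserted rather than argued, and computing the different here is exactly the wild computation you should avoid; it is cleaner to run your own Hensel substitution backwards. Writing $x=1+2y$ turns $x^{2}=1+4w$ into $y^{2}+y-w=0$, whose reduction is separable, which gives your forward direction; conversely, the unramified quadratic extension of a $2$-adic field $K$ equals $K(\theta)$ with $\theta^{2}+\theta=w$ for some $w\in\mathcal{O}_K$, and $(1+2\theta)^{2}=1+4w$, so if $K(\sqrt{u})/K$ is unramified quadratic then $u=(1+4w)c^{2}$ for some $c\in K^{*}$, and comparing valuations forces $c$ to be a unit, whence $u\equiv c^{2}\pmod{4}$. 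Second, since item~1 only makes $(\alpha)$ a square of a \emph{fractional} ideal, $\alpha$ may fail to be integral at odd primes, so you should fix the intended meaning of $\alpha\equiv\xi^{2}\pmod{4}$ as a condition supported at the primes above $2$ (where $\alpha$ is a unit); with that convention your CRT step is exactly right, and one should also record that the congruence forces $v_{\mathfrak p}(\xi)=0$ at every $\mathfrak p\mid 2$, so the resulting $\xi$ is automatically nonzero and a local unit, as the dyadic criterion requires.
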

 	
 	\begin{corollary}\label{primitive unramified extensions}
 		Let $k/k'$ be an extension of number fields and let $\alpha\in k'$ be a square-free in $k$. If $k'(\sqrt{\alpha})/k'$ is unramified, then $k(\sqrt{\alpha})/k$ is also unramified.
 	\end{corollary}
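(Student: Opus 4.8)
The plan is to reduce the statement to the stability of unramified extensions under base change. Since $\alpha\in k'$, the element $\sqrt{\alpha}$ generates the relevant quadratic extension over both fields, so $k(\sqrt{\alpha})$ is exactly the compositum $k\cdot k'(\sqrt{\alpha})$ inside a fixed algebraic closure; the hypothesis ``$\alpha$ square-free in $k$'' guarantees $\sqrt{\alpha}\notin k$, so this is a genuine quadratic extension. Writing $F=k'$, $K=k'(\sqrt{\alpha})$ and $L=k(\sqrt{\alpha})=Kk$, the hypothesis is that $K/F$ is unramified and the goal is that $L/k$ is unramified. As this is a local assertion, it suffices to fix one finite prime $\mathfrak{P}$ of $L$ and prove $e(\mathfrak{P}/\mathfrak{p})=1$, where $\mathfrak{p}=\mathfrak{P}\cap k$.

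I would set up the diamond of primes below $\mathfrak{P}$: let $\mathfrak{q}=\mathfrak{P}\cap K$ and $\mathfrak{p}_0=\mathfrak{P}\cap F$, so that the hypothesis reads $e(\mathfrak{q}/\mathfrak{p}_0)=1$. The core step is the base-change principle for unramified extensions: passing to completions, $L_{\mathfrak{P}}=k_{\mathfrak{p}}\cdot K_{\mathfrak{q}}$, and since $K_{\mathfrak{q}}/F_{\mathfrak{p}_0}$ is an unramified extension of local fields, the extension $L_{\mathfrak{P}}/k_{\mathfrak{p}}$ obtained from it by the base change $F_{\mathfrak{p}_0}\hookrightarrow k_{\mathfrak{p}}$ is again unramified. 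Concretely, $e(\mathfrak{q}/\mathfrak{p}_0)=1$ means $K_{\mathfrak{q}}$ lies in the maximal unramified extension of $F_{\mathfrak{p}_0}$, hence in that of $k_{\mathfrak{p}}$, so $L_{\mathfrak{P}}=K_{\mathfrak{q}}k_{\mathfrak{p}}$ is itself unramified over $k_{\mathfrak{p}}$; equivalently, an unramified local extension is generated by a root of a monic polynomial with separable reduction modulo the maximal ideal, a property preserved after enlarging the base field. Thus $e(\mathfrak{P}/\mathfrak{p})=1$.

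Alternatively, and avoiding completions, one can argue purely with the multiplicativity of ramification indices in towers: in $F\subset K\subset L$ one has $e(\mathfrak{P}/\mathfrak{p}_0)=e(\mathfrak{P}/\mathfrak{q})\,e(\mathfrak{q}/\mathfrak{p}_0)=e(\mathfrak{P}/\mathfrak{q})$, while in $F\subset k\subset L$ one has $e(\mathfrak{P}/\mathfrak{p}_0)=e(\mathfrak{P}/\mathfrak{p})\,e(\mathfrak{p}/\mathfrak{p}_0)$. Combining these with the divisibility $e(\mathfrak{P}/\mathfrak{p})\mid e(\mathfrak{q}/\mathfrak{p}_0)$ —valid because $K/F$ is Galois (being quadratic) and $L/k$ is its base change, so the restriction of Galois groups embeds the local inertia of $L/k$ into that of $K/F$— yields $e(\mathfrak{P}/\mathfrak{p})\mid 1$, hence $e(\mathfrak{P}/\mathfrak{p})=1$.

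The only real obstacle is precisely this base-change invariance of unramifiedness (equivalently the divisibility $e(\mathfrak{P}/\mathfrak{p})\mid e(\mathfrak{q}/\mathfrak{p}_0)$); everything else is bookkeeping with the multiplicativity of $e$ in towers. I would therefore isolate and justify that local fact first, and note that the same argument applies verbatim at the archimedean places, since a real place of $k$ lies over a real place of $k'$ and $\alpha$ keeps the same sign there. I would deliberately avoid invoking Proposition \ref{unramifed quad ex} directly, because it assumes $\alpha$ coprime to $2$, a hypothesis absent here; should one add that assumption, an alternative proof is immediate, as conditions (1) and (2) of that proposition are stated over $k'$ and pass unchanged to $k$ (a square fractional ideal of $k'$ extends to a square ideal of $k$, and the congruence $\alpha\equiv\xi^2\pmod 4$ is inherited with the same $\xi\in k'\subset k$).
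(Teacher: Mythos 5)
Your proof is correct, but it takes a genuinely different route from the paper's. The paper proves the corollary by applying Proposition \ref{unramifed quad ex} twice: from the unramifiedness of $k'(\sqrt{\alpha})/k'$ it extracts the two conditions $\alpha\mathcal{O}_{k'}=\mathfrak{b}^2$ and $\alpha\equiv\xi^2\pmod{4}$ with $\xi\in k'$, observes that both persist over $k$ (namely $\alpha\mathcal{O}_{k}=(\mathfrak{b}\mathcal{O}_{k})^2$ and the same $\xi$), and then applies the proposition in the converse direction --- which is exactly the ``alternative proof'' you sketch in your final paragraph and deliberately set aside. Your main argument is instead the local base-change principle: completing at compatible primes, $L_{\mathfrak{P}}=K_{\mathfrak{q}}k_{\mathfrak{p}}$ with $K_{\mathfrak{q}}/F_{\mathfrak{p}_0}$ unramified, so $K_{\mathfrak{q}}$ lies in the maximal unramified extension of $k_{\mathfrak{p}}$ and $L_{\mathfrak{P}}/k_{\mathfrak{p}}$ is unramified; your tower variant, via the embedding of inertia groups under restriction $\mathrm{Gal}(L/k)\hookrightarrow\mathrm{Gal}(K/F)$ (using that a quadratic extension is Galois), is equally sound. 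What your route buys is real extra generality: it requires no coprimality of $\alpha$ with $2$ and handles the archimedean places uniformly, whereas the paper's proof silently inherits the hypothesis ``$\alpha$ coprime with $2$'' from Proposition \ref{unramifed quad ex}, and that proposition only controls finite primes --- so strictly speaking the paper establishes the corollary as stated only for such $\alpha$, a gap your argument closes. What the paper's route buys is brevity and economy of means: it stays entirely within the one criterion already introduced, which suffices for the later applications since the generators $\gamma_i,\alpha_i,\pi_{j,i}$ used there are all odd; your closing observation that the proposition's two conditions are stated over $k'$ and pass unchanged to $k$ is precisely the content of the paper's two-line proof.
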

 	\begin{proof}
 		As $k'(\sqrt{\alpha})/k'$ is unramified, then there exist an ideal $\mathfrak b$ of $\mathcal{O}_{k'}$ such that $\alpha\mathcal{O}_{k'}=\mathfrak b^2$ and  $\xi\in k'$ verifying $\alpha \equiv \xi^2\pmod{4}$.
 		Therefore, $\alpha\mathcal{O}_{k}=(\mathfrak b\mathcal{O}_{k})^2$ and  $\xi\in k'$ verifying $\alpha \equiv \xi^2\pmod{4}$. So the result by
 		Proposition \ref{unramifed quad ex}.
 	\end{proof}
 	
 	Let us put the following definition.
 	
 	\begin{definition}
 		A number field $k$  is said QO-field if it is a quadratic extension of certain number field $k'$ whose class number is odd. We shall call $k'$ a base field of the QO-field $k$ and the $k/k'$ is a QO-extension. 	\end{definition}

 		\begin{lemma}\label{lemma 4.1}
 		Let $k/k'$ be   a QO-extension.   Let $\Delta(k)$ denote the multiplicative group  such that ${k^*}^2 \subset {\Delta(k)} \subset k^*$ and $k(\sqrt{\Delta(k)})$   the Hilbert genus field of $k$  $($see  the beginning of this section$)$. Then 
 		$$r_2(\Delta(k)/{k^*}^2)= t-e-1,$$
 		  	with  $t$ is the number of  ramified primes (finite or infinite) in the extension  $k/k'$ and $e$ is  defined by   $2^{e}=[E_{k'}:E_{k'} \cap N_{k/k'}(k^*)]$, where $E_{k'}$ is the unit group of $k'$. 
 	\end{lemma}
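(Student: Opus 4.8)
The plan is to reduce the statement to a computation of the $2$-rank of the class group $\mathbf{C}l(k)$ and then to read this rank off from the ambiguous class number formula. From the discussion at the beginning of this section we have $\mathrm{Gal}(E(k)/k)\simeq \mathbf{C}l(k)/\mathbf{C}l(k)^2$ and, by Kummer theory, $\mathrm{Gal}(k(\sqrt{\Delta(k)})/k)\simeq \Delta(k)/{k^*}^2$. Since $E(k)=k(\sqrt{\Delta(k)})$, these identifications give $r_2(\Delta(k)/{k^*}^2)=r_2(\mathbf{C}l(k))$, so it suffices to prove that $r_2(\mathbf{C}l(k))=t-e-1$.

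Write $\sigma$ for the generator of $\mathrm{Gal}(k/k')$ and let $C$ denote the Sylow $2$-subgroup of $\mathbf{C}l(k)$. The first key step is to apply Chevalley's ambiguous class number formula to the quadratic extension $k/k'$:
$$|\mathbf{C}l(k)^{\langle\sigma\rangle}|=\frac{h(k')\prod_{v}e_v}{[k:k']\,[E_{k'}:E_{k'}\cap N_{k/k'}(k^*)]},$$
where the product runs over all places $v$ of $k'$ and $e_v$ is the ramification index of $v$ in $k/k'$. In a quadratic extension each ramified place (finite or infinite) contributes a factor $2$, so $\prod_v e_v=2^t$; together with $[k:k']=2$ and $2^e=[E_{k'}:E_{k'}\cap N_{k/k'}(k^*)]$ this yields $|\mathbf{C}l(k)^{\langle\sigma\rangle}|=h(k')\,2^{t-e-1}$.

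The second key step, where the QO-hypothesis enters, is to show that the $2$-part of the ambiguous class group has order exactly $2^{r_2(\mathbf{C}l(k))}$. Because $h(k')$ is odd, $\mathbf{C}l(k')$ has trivial $2$-part; hence for any class $[\mathfrak a]\in C$ the ideal $N_{k/k'}(\mathfrak a)$ is principal, so $(1+\sigma)[\mathfrak a]=[\mathfrak a\,\mathfrak a^\sigma]=0$. Thus $\sigma$ acts as $-1$ on $C$, whence $C^{\langle\sigma\rangle}=C[2]$ has order $2^{r_2(C)}=2^{r_2(\mathbf{C}l(k))}$. Comparing the $2$-parts of both sides of the displayed formula, and using once more that $h(k')$ is odd so that the right-hand $2$-part is $2^{t-e-1}$, we obtain $2^{r_2(\mathbf{C}l(k))}=2^{t-e-1}$, i.e.\ $r_2(\mathbf{C}l(k))=t-e-1$.

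The routine parts are the bookkeeping with the ambiguous class number formula (the factors $2^t$, $[k:k']=2$, and $2^e$). I expect the main obstacle to be the clean justification that $\sigma$ acts by inversion on the $2$-part $C$ — equivalently that $r_2(\mathbf{C}l(k))$ equals the $2$-adic valuation of the ambiguous class number — since this is precisely the place where the oddness of $h(k')$ must be used; without it the ambiguous classes need not all be $2$-torsion and the identity $r_2(\mathbf{C}l(k))=v_2(|\mathbf{C}l(k)^{\langle\sigma\rangle}|)$ can fail.
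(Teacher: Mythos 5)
Your proposal is correct and follows the same route as the paper: the Kummer-theoretic identification $r_2(\Delta(k)/{k^*}^2)=\log_2[E(k):k]=r_2(\mathbf{C}l(k))$, followed by the ambiguous class number formula for the quadratic extension $k/k'$. The only difference is that the paper simply cites the resulting rank formula $r_2(\mathbf{C}l(k))=t-e-1$ (Yue, Lemma 2.4), whereas you prove that ingredient from scratch — Chevalley's formula giving $|\mathbf{C}l(k)^{\langle\sigma\rangle}|=h(k')2^{t-e-1}$, together with the observation that oddness of $h(k')$ forces $\sigma$ to act by inversion on the Sylow $2$-subgroup, so the ambiguous $2$-classes are exactly the $2$-torsion and $v_2(|\mathbf{C}l(k)^{\langle\sigma\rangle}|)=r_2(\mathbf{C}l(k))$ — which makes your argument self-contained and correctly pinpoints the one place where the QO-hypothesis is indispensable.
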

 	\begin{proof}Put  $ \mathrm{G}=\mathrm{Gal}(H(k)/k)$.
 		By the definition of $E(k) $   $($see  the beginning of this section$)$ we have: $$\mathbf{C}l(k)/\mathbf{C}l(k)^2   \simeq \mathrm{G}/\mathrm{G}^2 \simeq \mathrm{Gal}(E(k)/k),$$
 		Since $E(k)=k(\sqrt{\Delta(k)})$, then by class field theory:
 		$$r_2(\Delta(k)/{k^*}^2)= \log _2[k(\sqrt{\Delta(k)}): k]=\log _2[E(k):k]=r_2(\mathbf{C}l(k)).$$
 		So the result by   the well known ambiguous class number formula     (cf.    \cite[Lemma 2.4]{yueRedeix}).	\end{proof}

 	\begin{theorem}\label{thm gns fld of  QO}
 		Let $k=k'(\sqrt{\mu})$ be a ramified quadratic extension  of QO-fields that have the same base field. Assume furthermore, that $r_2(\mathbf{C}l(k))=r_2( \mathbf{C}l(k') )$.
 		Then we have $$E(k)=k(E(k')).$$
 	\end{theorem}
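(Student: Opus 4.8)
The plan is to prove the equality $E(k)=k(E(k'))$ by sandwiching: first establish the inclusion $k(E(k'))\subseteq E(k)$, then show that the two extensions have the same degree over $k$. Throughout I use the descriptions recalled at the beginning of Section \ref{sec:2}: namely $E(k')=k'(\sqrt{\Delta(k')})$ and $E(k)=k(\sqrt{\Delta(k)})$ are the maximal unramified elementary abelian $2$-extensions of $k'$ and of $k$, and $\log_2[E(k):k]=r_2(\mathbf{C}l(k))$, $\log_2[E(k'):k']=r_2(\mathbf{C}l(k'))$.

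First I would establish the inclusion $k(E(k'))\subseteq E(k)$. Write $\Delta(k')/{k'}^{*2}$ with a generating set represented by elements $\alpha_1,\dots,\alpha_r\in k'$, so that $E(k')=k'(\sqrt{\alpha_1},\dots,\sqrt{\alpha_r})$. Each $k'(\sqrt{\alpha_i})/k'$ is a subextension of the unramified extension $E(k')/k'$, hence is itself unramified; applying Corollary \ref{primitive unramified extensions} to $k/k'$, each $k(\sqrt{\alpha_i})/k$ is then unramified at the finite primes, and since the $\alpha_i$ lie in ${k'}^{*}\subseteq k^{*}$ the compositum $k(E(k'))=k(\sqrt{\alpha_1},\dots,\sqrt{\alpha_r})$ is an elementary abelian $2$-extension of $k$ lying inside $k(\sqrt{k^*})$. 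Unramifiedness at the archimedean primes is inherited too: $E(k')/k'$ is unramified at infinity (it sits inside the Hilbert class field $H(k')$), and a real place of $k$ above a real place of $k'$ stays real in $k(E(k'))$. Hence $k(E(k'))/k$ is unramified at every prime, and by maximality of the Hilbert genus field we get $k(E(k'))\subseteq E(k)$.

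It remains to compare degrees. Because $k/k'$ is ramified while $E(k')/k'$ is unramified, $k$ cannot lie inside $E(k')$; since $[k:k']=2$ this forces $k\cap E(k')=k'$, whence, $E(k')/k'$ being abelian,
$$[k(E(k')):k]=[E(k'):k\cap E(k')]=[E(k'):k']=2^{r_2(\mathbf{C}l(k'))}.$$
On the other hand $[E(k):k]=2^{r_2(\mathbf{C}l(k))}$, and the standing hypothesis $r_2(\mathbf{C}l(k))=r_2(\mathbf{C}l(k'))$ makes these two degrees equal. Combined with the inclusion of the previous step, this yields $k(E(k'))=E(k)$.

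I expect the degree comparison to be the real crux. The inclusion is essentially automatic from Corollary \ref{primitive unramified extensions} and maximality, whereas equality of degrees rests on two points that must be isolated carefully: the linear disjointness $k\cap E(k')=k'$, which is precisely where the \emph{ramified} hypothesis on $k/k'$ enters (against the unramifiedness of $E(k')/k'$), and the rank equality $r_2(\mathbf{C}l(k))=r_2(\mathbf{C}l(k'))$, which is what the QO and common-base-field hypotheses are there to make tractable through Lemma \ref{lemma 4.1}. A secondary technical point to watch is that Corollary \ref{primitive unramified extensions} only controls the finite primes, so the archimedean primes have to be handled separately, as indicated above.
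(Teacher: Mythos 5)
Your proof is correct, and although the overall sandwich (an inclusion plus a count of degrees) matches the paper's strategy, the middle step is executed by a genuinely different mechanism. The paper fixes Kummer generators $\gamma_1,\dots,\gamma_r$ of $E(k')$ over $k'$, pulls them up to $k$ via Corollary \ref{primitive unramified extensions} exactly as you do, and then proves by hand that they stay linearly independent modulo ${k^*}^2$, splitting a hypothetical relation $\beta\in {k^*}^2$ into the cases $\beta\in {k'^*}^2$ and $\mu\beta\in {k'^*}^2$; the ramified hypothesis is consumed in the second case (whose write-up in the paper is quite terse). You replace this element-by-element independence check with the single field-theoretic observation $k\cap E(k')=k'$ --- forced because $k/k'$ is ramified while $E(k')/k'$ is unramified everywhere --- and then compute $[k(E(k')):k]=[E(k'):k\cap E(k')]=2^{r_2(\mathbf{C}l(k'))}$ using that $E(k')/k'$ is Galois. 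Your disjointness statement is in fact a globalized version of the paper's second case: there, $\mu\beta\in {k'^*}^2$ would give $k=k'(\sqrt{\beta})\subseteq E(k')$, precisely the containment your argument rules out once and for all. Your route buys a cleaner, less error-prone argument that makes the role of each hypothesis transparent (ramifiedness for disjointness, the rank equality for the degree match), and you additionally notice that Corollary \ref{primitive unramified extensions} controls only finite primes and supply the archimedean argument, which the paper's proof silently omits. One caveat you inherit from the paper rather than create: Corollary \ref{primitive unramified extensions} rests on Proposition \ref{unramifed quad ex}, which requires the generator to be coprime to $2$ (and square-free in $k$), so strictly speaking the representatives $\alpha_i$ should be chosen accordingly; neither your proof nor the paper's addresses this normalization explicitly.
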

 	\begin{proof}
 		Let $\gamma_1$, ..., $\gamma_r$ be $r=r_2( \mathbf{C}l(k') )$ elements of $k'$ such that,
 		$$E(k')=k'(\sqrt{\gamma_1}, ..., \sqrt{\gamma_r}).$$
 		By Corollary \ref{primitive unramified extensions}, $k(\sqrt{\gamma_i})/k$ is unramified  for all $i$. Therefore, $\{ {\gamma_1}, ...,  {\gamma_r}\}$ is   a representative set of 
 		$\Delta(k)/k^2$. In fact, it suffices to prove that it is linearly independent  modulo ${k^*}^2 $. Put $\beta={\gamma_1} {\gamma_2}^{a_2}  ...   {\gamma_r}^{a_r}$, for some $a_i\in\{0,1\}$, and assume that $\beta \in k^2$. One can easily check that we have  $\beta\in k'^2$ or 	$\mu\beta\in k'^2$.
 		\begin{enumerate}[\indent\rm $\bullet$]
 			\item Since $\{ {\gamma_1}, ...,  {\gamma_r}\}$ is   a representative set of 
 			$\Delta(k')/k'^2$, the  case  $\beta\in k^2$,  is clearly impossible.
 			\item Assume that $\mu\beta\in k'^2$. Thus, $\mu{\gamma_1} {\gamma_2}^{a_2}  ...   {\gamma_r}^{a_r}= \alpha^2$, for some $\alpha \in k'$. 
 			Since $k/k'$ is ramified, this equality implies that there is a square-free element $\mu'$ of $k'$ such  that $\mu'=\alpha'^2$, for some $\alpha' \in k'$, which is impossible.
 		\end{enumerate}	
 		\noindent Since  $r_2(\mathbf{C}l(k))=r$, we have the result.	
 	\end{proof}

 	Thus, we have the following corollary.
 	\begin{corollary}\label{useful cor} 		
 		Let $k$ be a QO-number field, $k'$ its base field and $k_n$ be the $n$th layer of its cyclotomic $\mathbb{Z}_2$-extension. Let $n_0$ be an integer such that every prime which ramify in $k_\infty/k_{n_0}$ is totally ramified and $r_2(\mathbf{C}l(k_{n_0}))=r_2( \mathbf{C}l(k_{n_0+1}) )=r$.  Assume furthermore that   $k_{n_0+1}$ is a QO-field whose base field is the $(n_0+1)$th layer of the cyclotomic $\mathbb{Z}_2$-extension   of $k'$.
 		If $E(k)=k_{n_0}(\sqrt{\gamma_1}, ..., \sqrt{\gamma_r}) $ and $k_{n_0}'$ has  odd class number,  then 
 		$$ E(k_n)=k_n(\sqrt{\gamma_1}, ...,\sqrt{\gamma_r}),$$
 		for all $n\geq n_0$. Furthermore,  $ E(k_\infty)=k_\infty(\sqrt{\gamma_1}, ..., \sqrt{\gamma_r}).$ 
 	\end{corollary}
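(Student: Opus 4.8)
The plan is to prove the statement for the finite layers by induction on $n \geq n_0$, using Theorem \ref{thm gns fld of QO} to pass from level $n$ to level $n+1$, and then to recover the infinite layer by taking a union. At each step I apply the theorem to the quadratic extension $k_{n+1}/k_n$, so to proceed legitimately I must secure, at every level $n \geq n_0$, three things: that $k_n$ and $k_{n+1}$ are QO-fields, that $k_{n+1}/k_n$ is ramified, and — crucially — that $r_2(\mathbf{C}l(k_{n+1})) = r_2(\mathbf{C}l(k_n))$. The ramification is immediate from the hypothesis that every prime ramifying in $k_\infty/k_{n_0}$ is totally ramified.

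First I would propagate the QO-structure up the whole tower. The base tower $k'_\infty/k'_{n_0}$ is the cyclotomic $\mathbb{Z}_2$-extension of $k'_{n_0}$, in which the primes above $2$ are totally ramified; since $k'_{n_0}$ and $k'_{n_0+1}$ both have odd class number (the latter because $k_{n_0+1}$ is assumed QO with base field $k'_{n_0+1}$), their $2$-class groups are trivial at two consecutive layers, and Fukuda's stabilization theorem for the order of the $2$-class group then forces $h(k'_n)$ to be odd for all $n \geq n_0$. Consequently each $k_n$ is a QO-field with base field $k'_n$, and $k_{n+1}/k_n$ is a ramified quadratic extension of QO-fields for every $n \geq n_0$.

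Next I would establish the rank equality at all levels. From $r_2(\mathbf{C}l(k_{n_0})) = r_2(\mathbf{C}l(k_{n_0+1})) = r$ together with the total ramification in $k_\infty/k_{n_0}$, the Fukuda-type stabilization theorem for the $2$-rank yields $r_2(\mathbf{C}l(k_n)) = r$ for all $n \geq n_0$. This is the heart of the matter and the step I expect to be the main obstacle: the hypotheses only give the equality at the two layers $n_0$ and $n_0+1$, and one genuinely needs the total-ramification assumption to upgrade such a two-layer coincidence into stabilization throughout the tower (mere monotonicity of the rank under the injective transfer maps is not sufficient).

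With these inputs the induction runs smoothly. The base case $n = n_0$ is the hypothesis $E(k_{n_0}) = k_{n_0}(\sqrt{\gamma_1}, \ldots, \sqrt{\gamma_r})$. Assuming $E(k_n) = k_n(\sqrt{\gamma_1}, \ldots, \sqrt{\gamma_r})$, I note that each $\gamma_i$ lies in $k_{n_0} \subseteq k_n$ and that $k_n(\sqrt{\gamma_i})/k_n$ is unramified, being a subextension of $E(k_n)/k_n$; by Corollary \ref{primitive unramified extensions} (unramified extensions are preserved under base change) the extension $k_{n+1}(\sqrt{\gamma_i})/k_{n+1}$ is again unramified. Since $k_{n+1}/k_n$ is a ramified quadratic extension with $r_2(\mathbf{C}l(k_{n+1})) = r_2(\mathbf{C}l(k_n)) = r$, Theorem \ref{thm gns fld of QO} gives $E(k_{n+1}) = k_{n+1}(E(k_n)) = k_{n+1}(\sqrt{\gamma_1}, \ldots, \sqrt{\gamma_r})$, completing the inductive step. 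Finally, since the Hilbert genus field of $k_\infty$ is the union of those of its finite layers, I would conclude $E(k_\infty) = \bigcup_{n \geq n_0} E(k_n) = \bigcup_{n \geq n_0} k_n(\sqrt{\gamma_1}, \ldots, \sqrt{\gamma_r}) = k_\infty(\sqrt{\gamma_1}, \ldots, \sqrt{\gamma_r})$.
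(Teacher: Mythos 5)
Your proposal is correct and takes essentially the same route as the paper's own proof: both arguments invoke Fukuda's stabilization theorem twice (once along the base tower $k'_n$ to propagate odd class numbers, once along $k_n$ to get $r_2(\mathbf{C}l(k_n))=r$ for all $n\geq n_0$ from the two-layer hypothesis plus total ramification) and then apply Theorem \ref{thm gns fld of  QO} recurrently to the ramified quadratic steps $k_{n+1}/k_n$. The only difference is one of detail: you make explicit the induction, the role of Corollary \ref{primitive unramified extensions}, and the passage to $E(k_\infty)$ as a union over the finite layers, all of which the paper leaves implicit.
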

  	\begin{proof}
Under the hypothesis of the corollary, one can use the well known result of Fukuda (\textnormal{cf}.  \cite[Theorem 1]{fukuda}) to check what follows:
\begin{enumerate}[\indent$\bullet$]
	\item The class number of $k'_n$, the $n$th layer of  cyclotomic $\mathbb{Z}_2$-extension   of $k'$, is odd for all $n$.
	\item $r_2(\mathbf{C}l(k_{n}))=r$, for all $n\geq n_0$.
\end{enumerate}
 Therefore, a recurrent application of Theorem \ref{thm gns fld of  QO} gives the corollary.
  	\end{proof}

 	\section{\bf The Hilbert genus field of  $L_{m,d}=\mathbb{Q}(\zeta_{2^m},\sqrt{d})$}\label{sec:3}	
 	
 	Keep notations fixed in the previous sections. Now we  state the main result of the paper. For the existance of solutions of the diophantine equations mentioned on the theorem below, one can see \cite{OuyangZhang2014} and \cite[p. 324]{kaplan76}.
 	\begin{theorem}
 	Let  $d$ be a   square-free integer whose prime divisors are congruent to  $\pm 3\pmod 8$ or $9\pmod {16}$. Let $n$ be the number of prime divisors of    $d$,
 	 $s$ the number of prime divisors of 
 		$d$ which are  congruent $5\pmod 8$, $t$ the number of prime divisors of   
 		$d$ which are  congruent $3\pmod 8$ and  $r$ the number  of those which are congruent  $9\pmod{16}$. We have $n=r+t+s$.  Let  $d=\ell_1...\ell_r p_1...p_s q_1...q_t$
 		be the factorization of $d$ such that $\ell_i\equiv 9\pmod{16}$ and $p_i\equiv 5\equiv -q_i\pmod 8$.  Put $L_{m,d}=\QQ(\zeta_{ 2^{m}}, \sqrt{d}),$ $m\geq 3$.

 		\begin{enumerate}[\rm 1.]
 			
 			\item Assume that  $r=t=0$ and $s\geq2$. Let $(x_i,y_i)$  be a primitive solution of  
 			$p_1p_i=x^2+y^2$ $(i\geq 2)$ such that  $x_i\equiv 1\pmod 2$ and $y_i\equiv 0\pmod 4$. Then for all $m\geq 3$,
 			$$ E(L_{m,d})= L_{m,d}( \sqrt{p_1}, ..., \sqrt{p_{n-1}},\sqrt{\gamma_2},... ,\sqrt{\gamma_n}),$$
 			where $\gamma_i=x_i+y_j\sqrt{-1}$.
 			
 			\item 
 		Assume that $r=s=0$ and $t\geq2$. Let $(x_i',y_i')$  be a primitive solution of 
 			$q_1q_i=x^2+2y^2$ $(i\geq 2)$ such that $x_i'\equiv 1\pmod 2$ and $y_i'\equiv 0\pmod 2$ . Then for all $m\geq 1$,
 			$$ E(L_{m,d})= L_{m,d}( \sqrt{q_1}, ..., \sqrt{q_{n-1}},\sqrt{\alpha_2},... ,\sqrt{\alpha_n}),$$
 			where $\alpha_i=x_i'+y_i'\sqrt{-2}$.

 			\item Assume that $r=0$ and $t,s\geq2$.  Then for all $m\geq 3$,
 			\begin{eqnarray*}
 				E(L_{m,d})=  L_{m,d}
 				\left( \prod_{i=1}^{s}\QQ(\sqrt{p_i})\right)	\left( \prod_{i=1}^{t-1}\QQ(\sqrt{q_i})\right)
 				\left( \prod_{i=2}^{s}\QQ(\sqrt{\gamma_i})\right) \left( \prod_{i=2}^{t}\QQ(\sqrt{\alpha_i})\right)  ,
 			\end{eqnarray*}
 			where $\alpha_i$ and $\gamma_i$ are defined in the first two items.	
 			
 			\item  Assume that $r\geq 1$ and $t,s\geq2$. Let $(a_i,b_i)$, $(e_i,f_i)$ and $(u_i,v_i)$ such $\ell_i=a_i^2+16b_i^2=e_i^2-32f_i^2=u_i^2+8v_i^2$ and $a_i\equiv e_i\equiv u_i\equiv \pm 1\pmod{4}$. Then for all $m\geq 3$,
 			\begin{eqnarray*}
 				E(L_{m,d})= L_{m,d}   \left( \prod_{i=1}^{r}\QQ(\sqrt{l_i}, \sqrt{\pi_{1,i}}, \sqrt{\pi_{2,i}},\sqrt{\pi_{3,i}})\right) 
 				\left( \prod_{i=1}^{s}\QQ(\sqrt{p_i})\right)	\left( \prod_{i=1}^{t-1}\QQ(\sqrt{q_i})\right)\\
 				\left( \prod_{i=2}^{s}\QQ(\sqrt{\gamma_i})\right) \left( \prod_{i=2}^{t}\QQ(\sqrt{\alpha_i})\right)  ,
 			\end{eqnarray*}
 		 	where $\alpha_i$ and $\gamma_i$ are defined in the first two items, and $\pi_{1,i}=a_i+4ib_i$, $\pi_{2,i}= e_i+4f_i\sqrt{2}$ and $\pi_{3,i}= u_i+2v_i\sqrt{-2}$.
 			
 			\item Assume that $t=0$, $r\geq 1$ and $s\geq2$. For all $m\geq 3$, we have
 			\begin{enumerate}[\rm $\bullet$]
 				\item If $\exists i\in\{1, \dots, r\}$ such that  $ \left(\frac{2}{ l_i}\right)_4=1$, then
 				\begin{eqnarray*}
 					E(L_{m,d})= L_{m,d} \left( \prod_{i=1}^{r}\QQ(\sqrt{\pi_{1,i}}, \sqrt{\pi_ {2,i}},\sqrt{\pi_{3,i}})\right)
 					\left( \prod_{i=1}^{r-1}\QQ( \sqrt{l_i})\right)
 					\left( \prod_{i=1}^{s-1}\QQ(\sqrt{p_i})\right)
 					\left( \prod_{i=2}^{s}\QQ(\sqrt{\gamma_i})\right) ,
 				\end{eqnarray*}
 				\item else,
 				\begin{eqnarray*}
 					E(L_{m,d})= L_{m,d} \left( \prod_{i=1}^{r}\QQ(\sqrt{l_i}, \sqrt{\pi_{1,i}}, \sqrt{\pi_ {2,i}},\sqrt{\pi_{3,i}})\right)
 					\left( \prod_{i=1}^{s-1}\QQ(\sqrt{p_i})\right)
 					\left( \prod_{i=2}^{s}\QQ(\sqrt{\gamma_i})\right) ,
 				\end{eqnarray*}
 			\end{enumerate}
 			where $\gamma_i$ , $\pi_{1,i}$, $\pi_{2,i}$ and $\pi_{3,i}$ are defined as above.

 			\item  Assume that  $s=0$, $r\geq 1$ and $t\geq2$. For all $m\geq 3$, we have:

 			\begin{enumerate}[\rm $\bullet$]
 				\item If $\exists i\in\{1, \dots, r\}$ such that  $ \left(\frac{2}{ l_i}\right)_4=-1$, then
 				\begin{eqnarray*}
 					E(L_{m,d})= L_{m,d} \left( \prod_{i=1}^{r}\QQ(\sqrt{\pi_{1,i}}, \sqrt{\pi_ {2,i}},\sqrt{\pi_{3,i}})\right)
 					\left( \prod_{i=1}^{r-1}\QQ( \sqrt{l_i})\right)
 					\left( \prod_{i=1}^{t-1}\QQ(\sqrt{q_i})\right)
 					\left( \prod_{i=2}^{t}\QQ(\sqrt{\alpha_i})\right) ,
 				\end{eqnarray*}
 				
 				\item else,
 				\begin{eqnarray*}
 					E(L_{m,d})= L_{m,d} \left( \prod_{i=1}^{r}\QQ(\sqrt{l_i}, \sqrt{\pi_{1,i}}, \sqrt{\pi_ {2,i}},\sqrt{\pi_{3,i}})\right)
 					\left( \prod_{i=1}^{t-1}\QQ(\sqrt{q_i})\right)
 					\left( \prod_{i=2}^{t}\QQ(\sqrt{\alpha_i})\right) ,
 				\end{eqnarray*}

 			\end{enumerate}
 			where $\alpha_i$ , $\pi_{1,i}$, $\pi_{2,i}$ and $\pi_{3,i}$  are defined as above.	
 			\item Assume that $t=s=0$ and $r\geq 2$. For all $m\geq 3$, we have:
 			\begin{enumerate}[\rm $\bullet$]
 				\item If there exist $i, j\in\{1, \dots, r\}$ such that  $ \left(\frac{2}{ l_i}\right)_4\neq \left(\frac{2}{ l_j}\right)_4$, then

 				\begin{eqnarray*}
 					E(L_{m,d})=  L_{m,d} \left( \prod_{i=1}^{r}\QQ(\sqrt{l_i})\right)\left(\prod_{i=1}^{r-1}\QQ ( \sqrt{\pi_{1,i}}, \sqrt{\pi_ {2,i}},\sqrt{\pi_{3,i}})\right) ,
 				\end{eqnarray*}
 				\item else,
 				\begin{eqnarray*}
 				E(L_{m,d})=  L_{m,d} \left( \prod_{i=1}^{r}\QQ( \sqrt{l_i}, \sqrt{\pi_{1,i}})\right) \left( \prod_{i=1}^{r-1}\QQ( \sqrt{\pi_ {2,i}},\sqrt{\pi_{3,i}})\right)
 				,
 			\end{eqnarray*}
 				where $\pi_{1,i}$, $\pi_{2,i}$ and $\pi_{3,i}$ are defined as above.	
 			\end{enumerate}
 		\item Assume that $s=1$, $r\geq 1$ and $t\geq 2$.  For all  $m\geq 3$, we have
 			\begin{eqnarray*}
 			E(L_{m,d})= L_{m,d} \left( \prod_{i=1}^{r}\QQ(\sqrt{l_i}, \sqrt{\pi_{1,i}}, \sqrt{\pi_ {2,i}},\sqrt{\pi_{3,i}})\right)
 			\left( \prod_{i=1}^{t}\QQ(\sqrt{q_i})\right)
 			\left( \prod_{i=2}^{t}\QQ(\sqrt{\alpha_i})\right).
 		\end{eqnarray*}
 	
 		\item Assume that $t=1$, $r\geq 1$ and $s\geq 2$.  For all  $m\geq 3$, we have
 	\begin{eqnarray*}
 		E(L_{m,d})= L_{m,d} \left( \prod_{i=1}^{r}\QQ(\sqrt{l_i}, \sqrt{\pi_{1,i}}, \sqrt{\pi_ {2,i}},\sqrt{\pi_{3,i}})\right)
 		\left( \prod_{i=1}^{s}\QQ(\sqrt{p_i})\right)
 		\left( \prod_{i=2}^{s}\QQ(\sqrt{\gamma_i})\right).
 	\end{eqnarray*}
 		
 			\item Assume that $s=t=1$ and $r\geq 1$.  For all  $m\geq 3$, we have
 		\begin{eqnarray*}
 			E(L_{m,d})= L_{m,d} \left( \prod_{i=1}^{r}\QQ(\sqrt{l_i}, \sqrt{\pi_{1,i}}, \sqrt{\pi_ {2,i}},\sqrt{\pi_{3,i}})\right)
 			\QQ(\sqrt{p_1}).
 		\end{eqnarray*}
 	
 	\item Assume that $t=0$, $s=1$ and $r\geq 1$. For all $m\geq 3$, we have
 \begin{enumerate}[\rm $\bullet$]
 	\item If $\exists i\in\{1, \dots, r\}$ such that  $ \left(\frac{2}{ l_i}\right)_4=1$, then
 	\begin{eqnarray*}
 		E(L_{m,d})= L_{m,d} \left( \prod_{i=1}^{r}\QQ(\sqrt{\pi_{1,i}}, \sqrt{\pi_ {2,i}},\sqrt{\pi_{3,i}})\right)
 		\left( \prod_{i=1}^{r-1}\QQ( \sqrt{l_i})\right).
 	\end{eqnarray*}
 	\item else,
 	\begin{eqnarray*}
 		E(L_{m,d})= L_{m,d} \left( \prod_{i=1}^{r}\QQ(\sqrt{l_i}, \sqrt{\pi_{1,i}}, \sqrt{\pi_ {2,i}},\sqrt{\pi_{3,i}})\right).	
 	\end{eqnarray*}
 \end{enumerate}
 		
 		\item Assume that $s=0$, $t=1$ and $r\geq 1$. For all $m\geq 3$, we have
 	\begin{enumerate}[\rm $\bullet$]
 		\item If $\exists i\in\{1, \dots, r\}$ such that  $ \left(\frac{2}{ l_i}\right)_4=-1$, then
 		\begin{eqnarray*}
 			E(L_{m,d})= L_{m,d} \left( \prod_{i=1}^{r}\QQ(\sqrt{\pi_{1,i}}, \sqrt{\pi_ {2,i}},\sqrt{\pi_{3,i}})\right)
 			\left( \prod_{i=1}^{r-1}\QQ( \sqrt{l_i})\right).
 		\end{eqnarray*}
 		\item else,
 		\begin{eqnarray*}
 			E(L_{m,d})= L_{m,d} \left( \prod_{i=1}^{r}\QQ(\sqrt{l_i}, \sqrt{\pi_{1,i}}, \sqrt{\pi_ {2,i}},\sqrt{\pi_{3,i}})\right).	
 		\end{eqnarray*}
 	\end{enumerate}	
 
 	\item Assume that $r=0$ and $s=t=1$. For all $m\geq 3$, we have
            $$	E(L_{m,d})= L_{m,d}(\sqrt{p_1})= L_{m,d}(\sqrt{q_1}).$$
            
 	\item  Assume that $r=s=0$ and $t=1$ or  $r=t=0$ and $s=1$. For all  $m\geq 3$, we have
 		$$	E(L_{m,d})= L_{m,d}.$$
 		
 			\item  Assume that $t=s=0$ and $r=1$. For all  $m\geq 3$, we have
 			$$ E(L_{m,d})= L_{m,d}(\sqrt{\pi_{1,1}},\sqrt{\pi_{2,1}}), $$
 			where $\pi_{1,1}=a_1+4ib_1$ and $\pi_{2,1}=e_1+4f_1\sqrt{2}$ such that $l_1=a_1^2+16b_1^2=e_1^2-32f_1^2.$

 		\end{enumerate}	
 		
 	\end{theorem}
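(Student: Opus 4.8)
My overall strategy is to realize each $L_{m,d}$ as a QO-field over the base $k'=\QQ(\zeta_{2^m})$ and to reduce all fifteen cases to a single explicit computation at the bottom layer $m=3$, which is then transported up the cyclotomic $\ZZ_2$-tower by Corollary~\ref{useful cor}. The starting observations are that $\QQ(\zeta_{2^m})$ has odd class number for every $m$, so $L_{m,d}=\QQ(\zeta_{2^m})(\sqrt d)$ is genuinely a QO-field with base field $\QQ(\zeta_{2^m})$, and that for $m\geq 3$ the field $\QQ(\zeta_{2^m})$ contains $\sqrt{-1},\sqrt2,\sqrt{-2}$ and hence all the auxiliary numbers $\gamma_i,\alpha_i,\pi_{1,i},\pi_{2,i},\pi_{3,i}$ live in $L_{m,d}$, with the three quadratic subfields $\QQ(\sqrt{-1}),\QQ(\sqrt2),\QQ(\sqrt{-2})$ available throughout the tower. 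Thus I would first settle $m=3$ and then lift.

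The first substantive step is the genus-number count. Applying Lemma~\ref{lemma 4.1} to $L_{3,d}/\QQ(\zeta_8)$ gives $r_2(\mathbf{C}l(L_{3,d}))=t-e-1$, where $t$ is the number of primes of $\QQ(\zeta_8)$ ramifying in $L_{3,d}$ and $2^{e}=[E_{k'}:E_{k'}\cap N_{L_{3,d}/\QQ(\zeta_8)}(L_{3,d}^{*})]$. To obtain $t$ I would use the decomposition of each rational prime in $\QQ(\zeta_8)$: a prime $p\equiv 5\pmod 8$ and a prime $q\equiv 3\pmod 8$ each split into two primes (Frobenius of order $2$), while $\ell\equiv 9\pmod{16}$ splits completely into four primes; each such prime divides $d$ and therefore ramifies in $L_{3,d}/\QQ(\zeta_8)$, and since $\QQ(\zeta_8)$ is totally complex there is no contribution from infinite places. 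The prime above $2$ needs separate $2$-adic analysis: because the three ramified quadratic extensions of $\QQ_2$ are exactly $\QQ_2(\sqrt{-1}),\QQ_2(\sqrt2),\QQ_2(\sqrt{-2})\subset\QQ_2(\zeta_8)$ while the unramified one is $\QQ_2(\sqrt5)\not\subset\QQ_2(\zeta_8)$, whether $\sqrt d$ contributes ramification at $2$ is decided by $d\bmod 16$. Determining $e$ amounts to deciding which units of $\QQ(\zeta_8)$ are everywhere local norms; together these inputs convert the prime count into the exact $2$-rank listed in each of the fifteen cases, and it is here that the split into the parameters $r,s,t$ and the quartic residue symbols $\left(\frac{2}{\ell_i}\right)_4$ first manifests.

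Next I would produce the generators and prove they are exactly right. For each candidate $\alpha\in\{p_i,q_i,\ell_i,\gamma_i,\alpha_i,\pi_{1,i},\pi_{2,i},\pi_{3,i}\}$ I would verify, via Proposition~\ref{unramifed quad ex} applied over $\QQ(\zeta_8)$ (or a suitable quadratic subfield) and then lifted by Corollary~\ref{primitive unramified extensions}, that $L_{3,d}(\sqrt\alpha)/L_{3,d}$ is unramified at all finite primes: the congruences $p\equiv 5$, $q\equiv 3\pmod 8$, $\ell\equiv 9\pmod{16}$ and the prescribed shapes of $\gamma_i=x_i+y_i\sqrt{-1}$, $\alpha_i=x_i'+y_i'\sqrt{-2}$, $\pi_{1,i}=a_i+4ib_i$, $\pi_{2,i}=e_i+4f_i\sqrt2$, $\pi_{3,i}=u_i+2v_i\sqrt{-2}$ are engineered precisely so that the generated ideal is a square and the condition $\alpha\equiv\xi^2\pmod 4$ holds. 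I would then check $\QQ_2$-linear independence of the chosen set in $\Delta(L_{3,d})/L_{3,d}^{*2}$ by reading off valuations at the ramified primes and distinguishing the subfields $\QQ(\sqrt{-1}),\QQ(\sqrt2),\QQ(\sqrt{-2})$, and finally confirm that the number of chosen generators equals the $2$-rank from the genus count, so that they span $\Delta(L_{3,d})/L_{3,d}^{*2}$ and give $E(L_{3,d})$.

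The hardest part will be the $\ell\equiv 9\pmod{16}$ cases (items 4--13 and 15). There the four natural unramified classes $\sqrt{\ell_i},\sqrt{\pi_{1,i}},\sqrt{\pi_{2,i}},\sqrt{\pi_{3,i}}$ are not independent: since $\ell_i$ is simultaneously a norm from $\QQ(\sqrt{-1})$, $\QQ(\sqrt2)$ and $\QQ(\sqrt{-2})$, one must establish the exact multiplicative relations among $\ell_i,\pi_{1,i},\pi_{2,i},\pi_{3,i}$ modulo $L_{3,d}^{*2}$, and it is the quartic residue symbol $\left(\frac{2}{\ell_i}\right)_4$ that selects a maximal independent subset and hence dictates the change of generator set between the sub-cases. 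Coupling this with the $2$-adic behaviour feeding the unit-norm index $e$ is the delicate technical core. Once $E(L_{3,d})$ is pinned down in every case, the conclusion follows for all $m\geq 3$ (and for $m=\infty$) by invoking Corollary~\ref{useful cor}: the hypotheses on total ramification in $L_{\infty,d}/L_{3,d}$, the stability $r_2(\mathbf{C}l(L_{m,d}))=r_2(\mathbf{C}l(L_{m+1,d}))$ obtained from Fukuda's theorem \cite{fukuda}, and the oddness of the class numbers of the base fields $\QQ(\zeta_{2^m})$ let a recurrent application of Theorem~\ref{thm gns fld of  QO} carry the same generators up the tower, yielding the stated formulas.
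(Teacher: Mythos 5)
Your proposal is correct in outline and, for the core construction, coincides with the paper's proof: both verify that each candidate extension $L(\sqrt{\alpha})/L$, $L=L_{3,d}$, is unramified via Proposition \ref{unramifed quad ex} using precisely the congruences $\gamma_i,\alpha_i,\pi_{j,i}\equiv\pm1\pmod 4$ together with the ideal $(\alpha)$ being a square; both prove independence modulo $L^{*2}$ by reducing a hypothetical square $\beta$ (or $d\beta$) to $K=\QQ(\zeta_8)$ and applying the norms to the three quadratic subfields $k=\QQ(\sqrt{-1})$, $k'=\QQ(\sqrt{2})$, $k''=\QQ(\sqrt{-2})$; and both transport the answer from $m=3$ to all $m\geq 3$ by Corollary \ref{useful cor}, i.e.\ Fukuda's theorem \cite{fukuda} plus iterated use of Theorem \ref{thm gns fld of  QO}. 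The one genuine divergence is the provenance of the $2$-rank: the paper does not run the ambiguous-class-number count inside this proof but imports $r_2(\mathbf{C}l(L_{m,d}))$ case by case from \cite[Theorem 1]{Chemstherank} (the values $2n-2$, $2n-3$, $4r+2s+2t-3$, etc., with the dependence on $\left(\frac{2}{\ell_i}\right)_4$ already packaged in that reference), whereas you propose to rederive these ranks from Lemma \ref{lemma 4.1} by counting ramified primes and computing the unit-norm index $e$. That route is viable and more self-contained, but it shifts the real work into evaluating $e$, i.e.\ the norm-residue symbols of $\zeta_8$ and $1+\sqrt{2}$ at the ramified primes, in all fifteen cases --- exactly the content of the cited rank theorem --- so your ``delicate technical core'' is substantially heavier than anything the paper actually does. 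Two smaller corrections: ramification of the dyadic prime in $L_{3,d}/\QQ(\zeta_8)$ is governed by $d\bmod 8$, not $d\bmod 16$ (odd $d\equiv\pm1\pmod 8$ is already a square in $\QQ_2(\zeta_8)$, while $d\equiv\pm3\pmod 8$ yields the unramified quadratic extension, so the dyadic prime never ramifies here; the modulus $16$ enters only through $\left(\frac{2}{\ell_i}\right)_4$ and the rank); and your assertion that $\ell_i,\pi_{1,i},\pi_{2,i},\pi_{3,i}$ are never independent is wrong in the generic case 4, where the paper proves that all $4r$ of these classes are independent modulo $L^{*2}$ --- relations among them arise only in the degenerate cases (e.g.\ $t=s=0$, $r=1$, where $\ell_1=d\in L^{*2}$), which is what produces the subcase splitting by $\left(\frac{2}{\ell_i}\right)_4$.
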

 	
 \begin{proof}
 	To simplify notations let us put  $L=L_{3,d}=\mathbb{Q}(\zeta_{8}, \sqrt{d})=K( \sqrt{d})$, with $K =\mathbb{Q}(\zeta_{8})=\QQ(\sqrt{-1}, \sqrt{2})$.	
 	\begin{enumerate}[\rm 1.]
 		
 		\item As the couple $(x_i,y_i)$ is a primitive solution of
 		$p_1p_i=x^2+y^2$ $(i\geq 2)$ such that $x_i\equiv 1\pmod{2}$ and $y_i\equiv 0\pmod 4$, then  $\gamma_i=x_i+y_i\sqrt{-1}\equiv  x_i\pmod 4 \equiv \pm  1\pmod 4$. Since    $i=\sqrt{-1}\in L$, the equation $\gamma_i \equiv \xi^2\pmod{4}$ has solutions in $L$, furthermore the prime ideals of $K$ above $p_1$ and $p_i$ are ramified in  $L/K$, so $(\gamma_i)$ is the square of a fractional ideal of  $L$. It follows by Proposition \ref{unramifed quad ex}, that $\forall i\in\{2,...,n\}$, $L(\sqrt{\gamma_i})/L$ is an unramified extension  and it is very clear that the extension $L(\sqrt{p_i})/L$, $1\leqslant i \leqslant n-1$  is also unramified.\\
 		On the other hand by  \cite[Theorem 1]{Chemstherank}, we have $r_2(\mathbf{C}l(L))=2n-2$. Consider the following set :
 		$$\eta=\{p_1,...,p_{n-1},\gamma_2,..., \gamma_n \}.$$  
 		Let us show that the elements of  $\eta$  are linearly independents modulo   ${L^*}^2$. Put $\beta =\left(\displaystyle\prod_{i=1}^{n-1} {p_i}^{a_i}\right)\left(\displaystyle\prod_{j=2}^n {\gamma_j} ^{b_j}\right)$, where $a_i, b_j \in \{0,1\}$ are not all zero. Assume that $\beta \in {L^*}^2 .$ Thus, $\beta \in {K^*}^2 $ or $d\beta \in {K^*}^2 .$

 		\noindent Note that $\exists \, j \in \{2,3,\dots,n\}$ with $\,b_j\not=0$ (else, we get $\beta = \displaystyle\prod_ {i=1}^{n-1} {p_i}^{a_i} \in {K^*}^2 $, then $\sqrt{\beta}\in {K^*} $.
 		So  $\mathbb{Q}(\sqrt{\beta})$ is a quadratic subfield of 
 		$K=\mathbb{Q}(\zeta_8)=\QQ(\sqrt{-1}, \sqrt{2}) $. But this is impossible, since the only quadratic subfields of  $K=\mathbb{Q}(\zeta_8)$ are $k=\mathbb{Q}(\sqrt{-1})$, $k'=\mathbb{ Q}(\sqrt{2})$ and $k''=\mathbb{Q}(\sqrt{-2})$). We therefore distinguish two cases :
 		\begin{enumerate}[\rm $\bullet$]
  		\item If $\beta \in {K^*}^2 .$ Then 
 			$N_{K/k}( \gamma_j ) =x_j^2+y_j^2=p_1p_j $. Thus,
 			$$N_{K/k}(\beta) =\left(\displaystyle\prod_{i=1}^{n-1} {p_i}^{a_i}\right)^2\left(\displaystyle\prod_ {j=2}^n {p_1p_j}^{b_j}\right)=p_1^\ell\left(\displaystyle\prod_{j=2}^n {p_j}^{b_j}\right)Z^2, $$
 			such that $\ell\in\{0,1\}$ and $Z\in \mathbb Q$. This clearly implies that $p_1^\ell\left(\displaystyle\prod_{j=2}^n {p_j}^{b_j}\right)$ is a square in $k$, which is impossible.
 			Thus, $\beta \notin {K^*}^2 .$
 			
 			\item If $d\beta \in {K^*}^2,$ then  $N_{K/k}(d\beta)=d^2N_{K/k}(\beta)$. As above we show that this implies that $p_1^\ell\left(\displaystyle\prod_{j=2}^n {p_j}^{b_j}\right)\in k^2$, which is also  impossible.
 			So $\beta$ can not be a square in  $L$.  Therefore, the elements of  $\eta$ are linearly independents  modulo $L^2$. If follows that, $\eta$ is a representative set of  $\Delta(L)/L$.
 			Hence,   $$E(L_{3,d})=\mathbb{Q}(\zeta_{8}, \sqrt{d}, \sqrt{p_1}, ..., \sqrt{p_{n-1} },\sqrt{\gamma_2}...\sqrt{\gamma_n}).$$ 
 			Note that by \cite[Theorem 1]{Chemstherank}, we have  $r_2(\mathbf{C}l( L_{m,d}))=2n-2$, for all $m\geq 3$.
 			 Therefore, by Corollary \ref{useful cor}, we get :
 			
 			$$ E(L_{m,d})= L_{m,d}( \sqrt{p_1}, ..., \sqrt{p_{n-1}},\sqrt{\gamma_2},... ,\sqrt{\gamma_n}).$$
 		\end{enumerate}	
 		
 		\item By  \cite[Theorem 1]{Chemstherank}, we have $r_2(\mathbf{C}l(L))=2n-2$. Consider the set:
 		$$\eta=\{q_1,...,q_{n-1},\alpha_2,..., \alpha_n \},$$
 		Since  $\forall i\in\{2, \dots, n\}$:   $\alpha_i=x_i'+y_i'\sqrt{-2}$, where the couple $(x_i',y_i')$ is a primitive solution of
 		$q_1q_i=x^2+2y^2$ $(i\geq 2)$ such that $x_i'\equiv 1\pmod{2}$ and $y_i'\equiv 0\pmod 2$, then  $\alpha_i\equiv  x_i'+y_i'+\frac{-1+\sqrt{-2}}{2}2y_i'\pmod 4\equiv x_i'+y_i'\pmod 4  \equiv \pm  1\pmod 4.$
 	  	Proceeding as above we show that  the extensions  $L(\sqrt{\alpha_i})/L$ and $L(\sqrt{q_i})/L$ are unramified  and the elements of  $\eta$ are linearly independents  modulo $L^2$. Thus, $\eta$ is the set of generators of  $\Delta(L^*)/{L^*}^2$. Therefore, 
 		$$E(L)=L_{3,d} (\sqrt{q_1}, ..., \sqrt{q_{n-1}},\sqrt{\alpha_2},... ,\sqrt{\alpha_n}),$$   
 		and so by \cite[Theorem 1]{Chemstherank} and  Corollary \ref{useful cor}, we get :
 		$$ E(L_{m,d})= L_{m,d}( \sqrt{q_1}, ..., \sqrt{q_{n-1}},\sqrt{\alpha_2},... ,\sqrt{\alpha_n}).$$
 		
 		\item  By  \cite[Theorem 1]{Chemstherank}, we have  $r_2(\mathbf{C}l(L))=2n-3$. Put
 		$$\eta=\{{p_1}, ..., {p_{s}},{q_1}, ..., {q_{t-1}},{\gamma_2}, ...,{\gamma_s},{\alpha_2},...,{\alpha_t}\},$$
 		Let us show that the elements of    $\eta$ are linearly independents   modulo ${L^*}^2 $. Set
 		$\beta =\left(\displaystyle\prod_{i=1}^{s} {p_i}^{a_i}\right) \left(\displaystyle\prod_{i=1}^{t-1} {q_i}^{a_i'}\right)\left(\displaystyle\prod_{j=2}^s {\gamma_j}^{b_j}\right)\left(\displaystyle\prod_{j=2}^t {\alpha_j}^{b_j'}\right)$, where $a_i, b_j,a_i', b_j' \in \{0,1\}$  are not all zero. Assume that  $\beta \in {L^*}^2 .$ Then, $\beta \in {K^*}^2 $ or $d\beta \in {K^*}^2 .$ Note that $\exists j\in\{2, \dots, s\},\;b_j\neq 0$ or  $\exists j\in\{2, \dots, t\},\;b_j'\neq 0$.
 		
 		\begin{enumerate}[\rm $\bullet$]
 			\item Assume that $\beta \in {K^*}^2$. Since  
 			$N_{K/k''}( \alpha_j ) ={x_j'}^2+2{y_j'}^2=q_1q_j $ and $N_{K/k}( \gamma_j ) =x_j^2+y_j^2=p_1p_j $, we have
 			$$N_{K/k''}(\beta) =\left(\displaystyle\prod_{i=1}^{s} {p_i}^{a_i}\prod_{i=1}^{t-1 } {q_i}^{a_i'}\prod_{j=2}^s {\gamma_j}^{b_j}\right)^2\left(\displaystyle \prod_{j=2}^t {(q_1q_j)} ^{b_j'}\right)= q_1^{\ell }\left(\displaystyle \prod_{j=2}^t {q_j}^{b_j'} \right)Z_1^2,$$
 			$$N_{K/k}(\beta) =\left(\displaystyle\prod_{i=1}^{s-1} {p_i}^{a_i}\prod_{i=1}^{t- 1} {q_i}^{a_i'}\prod_{j=2}^s {\alpha_j}^{b_j'}\right)^2\left(\displaystyle \prod_{j=2}^t {(p_1p_j )}^{b_j}\right)= p_1^{\ell' }\left(\displaystyle \prod_{j=2}^t {p_j}^{b_j} \right)Z_2^2,$$
 			for some $\ell, \ell'\in\{0,1\}$ and $Z_i\in \mathbb Q$. This implies that $ q_1^{\ell }\left(\displaystyle\prod_{j=2}^t {q_j}^{b_j'} \right)$ is a square in  $k''$ and $ p_1^{\ell'}\left(\displaystyle\prod_{j=2}^t {p_j}^{b_j} \right)$  is a square in $k$, which is impossible. Thus, $\beta \notin {K^*}^2 .$
 			\item Analogously, we check that $d\beta \in {K^*}^2$ is impossible.
 		\end{enumerate}	
 		Since the extensions $L(\sqrt{p_i})/L$, $L(\sqrt{q_i})/L$, $L(\sqrt{\gamma_j})/L$ and $L(\sqrt{\alpha_j})/L$ are unramified, then the elements of  $\eta$ are generators of $\Delta(L^*)/{L^*}^2$. Hence, 
 		\begin{eqnarray*}
 			E(L_{m,d})&=& L_{m,d}( \sqrt{p_1}, ..., \sqrt{p_{s}}, \sqrt{q_1}, ..., \sqrt{q_{t-1}}, \sqrt{\gamma_2},... ,\sqrt{\gamma_s}, \sqrt{\alpha_2},... ,\sqrt{\alpha_t})\\
 			&=& L_{m,d}
 			\left( \prod_{i=1}^{s}\QQ(\sqrt{p_i})\right)	\left( \prod_{i=1}^{t-1}\QQ(\sqrt{q_i})\right)
 			\left( \prod_{i=2}^{s}\QQ(\sqrt{\gamma_i})\right) \left( \prod_{i=2}^{t}\QQ(\sqrt{\alpha_i})\right).
 		\end{eqnarray*}

 		\item In this case, we have $r_2(\mathbf{C}l(L))=n-3= 4r+2s+2t-3$ . Consider the set:
 		$$\eta=\{l_1,..., l_r,  {p_1}, ..., {p_{s}},{q_1}, ..., {q_{t-1}},{\pi_{1,1}},...,{\pi_{1,r}},\pi_{2,1},...,{\pi_{2,r}},\pi_{3,1},...,{\pi_{3,r}}, ,{\gamma_2},...,{\gamma_s},{\alpha_2},...,{\alpha_t}\},$$
 		
 		Put
 		$$\beta=\left(\displaystyle\prod_{i=1 }^{r}{l_i}^{\theta_1}{\pi_{1,i}}^{\theta_2}{\pi_{2,i}}^{\theta_3}{\pi_{3,i}}^{\theta_4}\right) \left(\displaystyle\prod_{i=1}^{s} {p_i}^{\theta_1'}\right) \left(\displaystyle\prod_{i=1}^{t-1} {q_i}^ {\theta_2'}\right)\left(\displaystyle\prod_{j=2}^s {\gamma_j}^{\theta_3'}\right)\left(\displaystyle\prod_{j=2}^t {\alpha_j}^{\theta_4'}\right),$$
 		
 		where ${\theta_{1,i}},{\theta_{2,i}},{\theta_{3,i}},{\theta_{4,i}},  {\theta_{1,i}'}, {\theta_{2,i}'}, {\theta_{3,i}'}, {\theta_{4,i}'} \in \{0,1\}$ are not all zero.\\ Assume that $\beta \in {L^*}^2 $, then   $\beta \in {K^*}^2 $ or  $d\beta \in {K^*}^2 $ (note that, as in the first item, the exponents  ${\theta_{2,i}},{\theta_{3,i}},{\theta_{4,i}}, {\theta_{3,i}'}, {\theta_{4,i}'}$ 
 		can not all vanish, since elsewhere we get   $\beta=\left(\displaystyle\prod_{i=1 }^{r}{l_i}^{\theta_{1,i}}\right)\left(\displaystyle\prod_{i=1}^{s} {p_i}^{\theta_{1,i}'}\right)\left(\displaystyle\prod_{i=1}^{t-1} {q_i}^ {\theta_{2,i}'}\right)$ is a square in  $K^*$, which implies that $\QQ(\sqrt{\beta})$ is a quadratic subfield of  $K$ which is impossible). 
 		\begin{enumerate}[\rm $\bullet$]
 			\item Let $\beta \in {K^*}^2 .$ Since, 	$N_{K/k}( \pi_{1,i} ) =a_i^2+16b_i^2=l_i $,  
 			$N_{K/k}( \gamma_j ) =x_j^2+y_j^2=p_1p_j $, $N_{K/k^{'}}( \pi_{2,i} ) =e_i^2-32b_i^2=l_i$,  $N_{K/k^{''}}( \pi_{3,i} ) =u_i^2+8v_i^2=l_i $ and 	$N_{K/k^{''}}( \alpha_j ) ={x_j'}^2+2{y_j'}^2=q_1q_j$, then

 			{\small
 				\begin{eqnarray*}
 					N_{K/k}(\beta) &=& \left(\displaystyle\prod_{i=1 }^{r}{l_i}^{\theta_{1,i}}{\pi_{2,i}}^{\theta_{3,i}}{\pi_{3,i}}^{\theta_{4,i}}\right)^2\left(\displaystyle\prod_{i=1 }^{r}{l_i}^{\theta_{2,i}}\right)\left(\displaystyle\prod_{i=1}^{s} {p_i}^{\theta_{1,i}'}\right)^2 \left(\displaystyle\prod_{i=1}^{t-1} {q_i}^ {\theta_{2,i}'}\right)^2  \left(\displaystyle\prod_{j=2}^s ({p_1p_j})^{\theta_{3,i}'}\right) \left(\displaystyle\prod_{j=2}^t {\alpha_j}^{\theta_{4,i}'}\right)^2    
 					\\
 					&=& p_1^{\ell }\left(\displaystyle\prod_{i=1 }^{r}{l_i}^{\theta_{2,i}}\right)\left(\displaystyle \prod_{j=2}^t {p_j}^{\theta_{3,i}'} \right)Z_1^2. 
 				\end{eqnarray*}
 				
 				\begin{eqnarray*}
 					N_{K/k^{'}}(\beta) &=& \left(\displaystyle\prod_{i=1 }^{r}{l_i}^{\theta_{1,i}}{\pi_{1,i}}^{\theta_{2,i}}{\pi_{3,i}}^{\theta_{4,i}}\right)^2\left(\displaystyle\prod_{i=1 }^{r}{l_i}^{\theta_{3,i}}\right)\left(\displaystyle\prod_{i=1}^{s} {p_i}^{\theta_{1,i}'}\right)^2 \left(\displaystyle\prod_{i=1}^{t-1} {q_i}^ {\theta_{2,i}'}\right)^2 \left(\displaystyle\prod_{j=2}^s {\gamma_j}^{\theta_{3,i}'}\right)^2 \left(\displaystyle\prod_{j=2}^t {\alpha_j}^{\theta_{4,i}'}\right)^2    
 					\\
 					&=&\left(\displaystyle\prod_{i=1 }^{r}{l_i}^{\theta_{3,i}}\right)Z_2^2. 
 				\end{eqnarray*}
 				
 				\begin{eqnarray*}
 					N_{K/k^{''}}(\beta) &=& \left(\displaystyle\prod_{i=1 }^{r}{l_i}^{\theta_{1,i}}{\pi_{1,i}}^{\theta_{2,i}}{\pi_{2,i}}^{\theta_{3,i}}\right)^2\left(\displaystyle\prod_{i=1 }^{r}{l_i}^{\theta_{4,i}}\right)\left(\displaystyle\prod_{i=1}^{s} {p_i}^{\theta_{1,i}'}\right)^2 \left(\displaystyle\prod_{i=1}^{t-1} {q_i}^ {\theta_{2,i}'}\right)^2\left(\displaystyle\prod_{j=2}^s {\gamma_j}^{\theta_{3,i}'}\right)^2 \left(\displaystyle\prod_{j=2}^t ({q_1q_j})^{\theta_{4,i}'}\right)    
 					\\
 					&=& q_1^{\ell^{'}} \left(\displaystyle\prod_{i=1 }^{r}{l_i}^{\theta_{4,i}}\right) \left(\displaystyle \prod_{j=2}^t {q_j}^{\theta_{4,i}'} \right)Z_3^2. 
 			\end{eqnarray*}}
 			
 			such that $\ell, \ell^{'}, \in\{0,1\}$ and $Z_i\in \mathbb Q$. Thus, $ p_1^{\ell }\left(\displaystyle\prod_{i=1 }^{r}{l_i}^{\theta_{2,i}}\right)\left(\displaystyle \prod_{j=2}^t {p_j}^{\theta_{3,i}'} \right)$ is a square in   $k$, $ \left(\displaystyle\prod_{i=1 }^{r}{l_i}^{\theta_{3,i}}\right)$ is a square in   $k'$ and $ q_1^{\ell^{'}} \left(\displaystyle\prod_{i=1 }^{r}{l_i}^{\theta_{4,i}}\right) \left(\displaystyle \prod_{j=2}^t {q_j}^{\theta_{4,i}'} \right) $ is a square in    $k^{''}$, then    the above three   equations are impossible. 
 			\item We similarly show that  $d\beta \in {K^*}^2$ is impossible. Thus, the elements of $\eta$ are linearly independents modulo ${L^*}^2 $.
 		\end{enumerate} 
 		On the other hand as above,  we check that the primes of   $L$ generated respectively by   $\pi_{1,i}$, ${\pi_{2,i}}$ and $\pi_{3,i}$ are the squares of certain fractional  ideals of  $L$ and as    $a_i\equiv e_i \equiv \pm 1\pmod 4$, since they are odd, the equations $\pi_{1,i} \equiv \xi^2\pmod 4$ and  $\pi_{2,i} \equiv \xi^2\pmod 4$ have solutions in  $L$ (since $i\in L$). Furthermore, we have $\pi_{3,i}=u_i+2v_i\sqrt{-2}=u_i+2v_i+4v_i \frac{-1+\sqrt{-2}}{2}\equiv u_i+2v_i\pmod 4\equiv \pm 1\pmod 4$ ($u_i$ is odd), then  the equation  $\pi_{3,i} \equiv \xi^2\pmod 4$ also has solutions $L$. Thus, by  Proposition \ref{unramifed quad ex}, the extensions  $L(\sqrt{\pi_{1,i}})/L$, $L(\sqrt{\pi_{2,i}})/L $ and $L(\sqrt{{\pi_{3,i}}})/L$  are unramified for  $i\in\{1,   \dots, k\}$. It follows that  $\eta$ is a  set of generators of $\Delta(L^*)/{L^*}^2$. Hence, we have the fourth item.
 		 
 		\item By  \cite[Theorem 1]{Chemstherank}, if $\exists i\in\{1, \dots, r\}$ such that  $ \left(\frac{2}{ l_i}\right)_4=-1$, then we have $r_2(\mathbf{C}l(L))=4r+2s-3$. Put:
 		$$\eta=\{l_1,..., l_{r-1},  {p_1}, ..., {p_{s}}, {\pi_{1,1}},...,{\pi_{1,r}},\pi_{2,1},...,{\pi_{2,r}},\pi_{3,1},...,{\pi_{3,r}}, ,{\gamma_2},...,{\gamma_s}\},$$				
 		As above we show that  $\eta$ is a set of generators of  $\Delta(L)/{L^*}^2$. Therefore,

 		\begin{eqnarray*}
 			E(L_{m,d})= L_{m,d} \left( \prod_{i=1}^{r}\QQ(\sqrt{\pi_{1,i}}, \sqrt{\pi_ {2,i}},\sqrt{\pi_{3,i}})\right)
 			\left( \prod_{i=1}^{r-1}\QQ( \sqrt{l_i})\right)
 			\left( \prod_{i=1}^{s-1}\QQ(\sqrt{p_i})\right)
 			\left( \prod_{i=2}^{s}\QQ(\sqrt{\gamma_i})\right).
 		\end{eqnarray*}
 		
 	\end{enumerate}	
 	With analogous reasoning,  we prove the rest of our theorem.
 \end{proof}

 	\section*{\bf Acknowledgment}
 The authors are so grateful to  the unknown referee  for his/her several helpful suggestions that helped us to improve our paper, and for calling our attention
 	to the missing details.

 \end{document}